\long\def\symbolfootnote[#1]#2{\begingroup\def\thefootnote{\fnsymbol{footnote}}\footnote[#1]{#2}\endgroup}
\newtheorem{theorem}{Theorem}[section]
\newtheorem{corollary}[theorem]{Corollary}
\newtheorem{lemma}[theorem]{Lemma}
\theoremstyle{remark}
\newtheorem{remark}[theorem]{Remark}
\theoremstyle{definition}
\theoremstyle{proposition}
\numberwithin{equation}{section}
\begin{document}
\author{Linfeng Zhou}
\title{The spherically symmetric Finsler metrics with isotropic S-curvature}
\date{}
\maketitle

\begin{abstract}
In this paper, we investigate the spherically symmetric Finsler metrics with isotropic S-curvature and obtain a characterized equation.  As an application, we prove that these metrics with Douglas type must be Randers metrics or Berwald metrics. This result leads to two classification theorems. 
\\

\noindent\textbf{2000 Mathematics Subject Classification:}
53B40, 53C60, 58B20.\\
\textbf{Keywords and Phrases: spherical symmetry, S-curvature.}

\end{abstract}
\symbolfootnote[0]{This work is supported by an NSFC grant No. 11301187.}
\section{Introduction}
The S-curvature is a non-Riemannian quantity and plays an important role both in local and global Finsler geometry. It first appeared in Z. Shen's famous paper \cite{Sh} where he used it to derive a volume comparison theorem in Riemann-Finsler geometry. In 2007, B. Y. Wu and Y. L. Xin continued the investigations along the direction of generalizing the comparison techniques in Riemannain geometry to Finsler setting and obtained Hessian comparison theorem, Laplacian comparison theorem and volume comparison theorem under assumption of S-curvature and the flag curvature or Ricci curvature \cite{WX}.  Recently, S. Ohta and K. Sturm introduced the weighted Ricci curvature which also involves S-curvature and gave Bochner-Weitzenb\"ock formula and Li-Yau estimates on Finsler manifolds \cite{OS}.
 
 Furthermore, S-curvature is related to the Riemann tensor in the flowing way
 \[S_{\cdot k|m}y^m-S_{|k}=-\frac{1}{3}\{2R^m_{\ k\cdot m}+R^m_{\ m\cdot k}\},\]
 where $( \dots)_{|}$ is the horizontal covariant derivative and $(\dots )_{\cdot}$ is the vertical covariant derivative with respect to Chern connection \cite{Mo}. This identity leads to a well known result which is proved by X.  Chen , X. Mo and Z. Shen \cite{CMS}: if a $n$-dimensional Finsler metric $F$ is of scalar flag curvature and has almost isotropic S-curvature, i.e.
 \[S=(n+1){cF+\eta},\]
where $c$ is a scalar function and $\eta$ is a closed $1$-form, then the flag curvature, which is a generalization of the sectional curvature in Riemannian case, has following form
\[K=3\frac{c_{x^m}y^m}{F}+\sigma,\]
where $\sigma$ is a scalar function. Hence a natural problem is to study and characterize Finsler metrics of (almost) isotropic S-curvature. 

For a Randers metric $F=\alpha+\beta$,  X. Cheng and Z. Shen first calculated its S-curvature and proved that the condition of isotropic S-curvature is equivalent to the condition of isotropic E-curvature in 2003 \cite{CS1}.  By the viewpoint of Zermelo navigation, H. Xing found that  the isotropic S-curvature means its navigation vector field is conformal \cite{X}. This shows the condition of isotropic S-curvature actually is rather relaxed.  If imposing of scalar flag curvature and isotropic S-curvature, X. Cheng and Z. Shen completely determined the local structure of Randers metrics \cite{CS2}.  

For more general $(\alpha, \beta)$-metrics $F=\alpha\phi(\frac{\beta}{\alpha})$,  X. Cheng and Z. Shen obtained the characterized equations of $(\alpha,\beta)$-metrics with isotropic S-curvature \cite{CS}.  Using their result, Y. Shen and H. Tian gave an equivalent condition that an $(\alpha,\beta)$ metric admits
a measure $\mu$ with vanishing S-curvature and the characterization of $(\alpha,\beta)$-metrics admitting a measure $\mu$ with
weak isotropic S-curvature \cite{ST}.

Spherically symmetric metrics in Finsler setting are first introduced by S. F. Rutz who studied the spherically symmetric Finsler metrics in 4-dimensional time-space and generalized the classic Birkhoff theorem in general relativity to the Finsler case \cite{R}. The author obtained the general form of this kind of metrics to be $F=u\phi(r,s)$ in higher dimension, where $u=|y|$, $r=|x|$ and $s=\frac{\langle x, y\rangle}{|y|}$ and found many known examples belong to this type such as Riemannian space form, Funk metric, Berwald's example, Bryant's example \cite{Zh1}. Moreover, they have a nice symmetry and are invariant under any rotations. So it is quite valuable to study spherically symmetric Finsler metrics. Recently, some works have been carried out on this type of metrics \cite{HM}\cite{MST}\cite{MZ}\cite{Zh2}\cite{Zh3}. 

Since a spherically symmetric Finsler metric does not need to be $(\alpha, \beta)$ metric,  one problem is how to characterize this type of metrics with isotropic S-curvature.  

The main aim of this paper is to understand the spherically symmetric Finsler metrics with isotropic S-curvature.  We first obtain the characterized equation of this type of metrics with isotropic S-curvature if attached to a spherically symmetric volume form. Here the volume form is required to be spherically symmetric is not so strict since both Busemann-Hausdorff volume form and Holmes-Thompson volume form satisfy this condition.  It is not easy to solve the characterized equation directly to completely determine the metrics.  

By analyzing the characterized equation, we find that if imposing the Douglas type condition, then the spherically symmetric Finsler metrics with isotropic S-curvature must be Randers metrics or Berwald metric.  This result makes it possible to obtain the following two classification theorems.

\begin{theorem} \label{ctbh}
Let $F$ be a spherically symmetric Finsler metric on an open set $\Omega$ in $R^n$ and $dV_{BH}=\sigma_{BH}(x)dx$ be a Busemann-Hausdorff volume form.  $F$ is a Douglas metric and has isotropic S-curvature with respect to $dV_{BH}$ if and only if one of the following holds:
\begin{enumerate}
\item $F$ is Riemannian; 
\item $F$ is a Berwald metric which can be formulated by $$F=u\psi(\frac{s^2}{g(r)+s^2\int 4rc_2(r)g(r)dr})e^{-\int (\frac{2}{r}-2r^3c_2(r))dr}s,$$ 
where $c_2(r)$ is a smooth function and $g(r)=e^{\int (\frac{2}{r}-4r^3c_2(r))    dr}$;
\item $F=\sqrt{f(r)|y|^2+g(r)\langle x,y\rangle^2}+h(r)\langle x,y \rangle$ where the smooth functions $f(r)$, $g(r)$, $h(r)$ satisfy  \[f(r)>0, \quad f(r)+r^2\big(g(r)-h(r)^2\big)>0\] and
\[r^2fh\frac{d g(r)}{dr}+\big(2rfh+r^2f'h-2r^2fh'\big)g(r)+2ff'h-2f^2h'-2rfh^2-r^2f'h^3=0.\]

\end{enumerate}

\end{theorem}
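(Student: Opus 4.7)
The plan is to leverage the paper's main technical assertion (stated a few paragraphs above the theorem) that for spherically symmetric Finsler metrics, the combination of Douglas type and isotropic S-curvature already forces $F$ to be either a Randers metric or a Berwald metric. Granting this dichotomy reduces Theorem \ref{ctbh} to two subclassifications: (a) spherically symmetric Berwald metrics with isotropic S-curvature with respect to $dV_{BH}$, and (b) spherically symmetric Randers metrics with the same property. The converse direction, that each family in (1)--(3) genuinely satisfies the hypotheses, is a direct verification using the displayed formulas themselves.

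For the Berwald branch, I would combine two ingredients. First, every Berwald metric has vanishing S-curvature with respect to its Busemann--Hausdorff volume form, so the isotropic condition $S = (n+1)cF$ is automatic with $c \equiv 0$ as soon as Berwald type is established; no additional equations come from the S-curvature assumption. Second, the classification of spherically symmetric Berwald metrics in the form $F = u\phi(r,s)$ yields a system of ODEs for $\phi$ (as developed in the author's earlier works \cite{Zh2}, \cite{Zh3}), and integrating this system in two stages, with an auxiliary radial function $g(r)$ acting as an integrating factor, should reproduce exactly the closed form in item (2), with the free function $\psi$ and the parameter $c_2(r)$ encoding the remaining degrees of freedom.

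For the Randers branch, spherical symmetry forces the Riemannian part to have the shape $\alpha = \sqrt{f(r)|y|^2 + g(r)\langle x,y\rangle^2}$ and the $1$-form part the shape $\beta = h(r)\langle x,y\rangle$, with the displayed positivity inequalities guaranteeing $\alpha > 0$ and $\|\beta\|_\alpha < 1$. The Riemannian conclusion (1) is simply the degenerate case $h \equiv 0$, where $F = \alpha$ is automatically Douglas with $S = 0$. For $h \not\equiv 0$, I would apply the Cheng--Shen characterization \cite{CS1} of Randers metrics with isotropic S-curvature, substitute the spherically symmetric ansatz into their equations, and compute the covariant derivatives of $\beta$ with respect to $\alpha$; this should collapse the system to the single ODE in $f, g, h$ displayed in item (3).

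The step I expect to be hardest is the closed-form integration in the Berwald case: the intricate argument of $\psi$, namely $s^2/\bigl(g(r) + s^2\!\int\! 4rc_2(r)g(r)\,dr\bigr)$, suggests a nontrivial change of variables that linearizes a second-order PDE in $\phi(r,s)$, and identifying that change and verifying no solutions are lost will require careful bookkeeping. The Randers PDE, although visually complicated, should follow from a direct computation once the spherically symmetric forms of $\alpha$ and $\beta$ are substituted into the Cheng--Shen criterion.
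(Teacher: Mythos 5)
Your proposal is correct and follows essentially the same route as the paper: the rigidity dichotomy of Theorem \ref{mt} reduces everything to the Randers case, which is then settled exactly as you describe, by substituting $\alpha=\sqrt{f|y|^2+g\langle x,y\rangle^2}$, $\beta=h\langle x,y\rangle$ into the Cheng--Shen criterion $e_{00}=2c(\alpha^2-\beta^2)$, computing $b_{i;j}$, and eliminating $c$ to get the ODE in item (3). The only divergence is in the Berwald branch, where the paper needs no separate integration of the Berwald classification at all: the explicit formula in item (2) already comes out of the proof of Theorem \ref{mt} by solving a first-order linear PDE by characteristics (Lemma \ref{So}), rather than the second-order problem you anticipate.
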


\begin{theorem} \label{ctht}
Let $F$ be a spherically symmetric Finsler metric on an open set $\Omega$ in $R^n$ and $dV_{HT}=\sigma_{HT}(x)dx$ be a Holmes-Thompson volume form.  $F$ is a Douglas metric and has isotropic S-curvature with respect to $dV_{HT}$ if and only if  one of the following holds:
\begin{enumerate}

\item $F$ is Riemannian; 

\item $F$ is a Berwald metric which can be formulated by $$F=u\psi(\frac{s^2}{g(r)+s^2\int 4rc_2(r)g(r)dr})e^{-\int (\frac{2}{r}-2r^3c_2(r))dr}s,$$ 
where $c_2(r)$ is a smooth function and $g(r)=e^{\int (\frac{2}{r}-4r^3c_2(r))    dr}$;

\item $F=\sqrt{\frac{c}{r^2}|y|^2+g(r)\langle x,y\rangle^2}+h(r)\langle x,y \rangle$ where the positive constant $c$ and the smooth functions $g(r)$, $h(r)$ satisfy
 \[g(r)-h(r)^2>-\frac{c}{r^4},\quad 2h'(\frac{c}{r^2}+r^2g)-h(r^2g'-\frac{4c}{r^3})=0.\]
\end{enumerate}
\end{theorem}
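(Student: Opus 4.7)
The plan is to combine the characterized equation for spherically symmetric metrics of isotropic S-curvature with respect to $dV_{HT}$ (derived earlier in the paper) with the reduction, established in the preceding section, that any spherically symmetric Douglas metric of isotropic S-curvature must be either Berwald or Randers. Under this dichotomy the proof splits into two classification problems, and the Riemannian case (1) appears as the degenerate intersection of the two branches.

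For the Berwald branch, every Berwald metric has vanishing S-curvature with respect to any parallel volume form, and both $dV_{BH}$ and $dV_{HT}$ are parallel once $F$ is Berwald. Consequently the Berwald branch of Theorem \ref{ctht} coincides word-for-word with that of Theorem \ref{ctbh}, and I would simply reuse the explicit integration of the spherically symmetric Berwald system in $\phi(r,s)$ carried out there to arrive at case (2).

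For the Randers branch I would write $F = \alpha + \beta$ with $\alpha^2 = f(r)|y|^2 + g(r)\langle x,y\rangle^2$ and $\beta = h(r)\langle x,y\rangle$, the canonical spherically symmetric Riemannian metric and $1$-form. The decisive simplification is the classical identity $dV_{HT}^F = dV_\alpha$ for Randers metrics: the Holmes-Thompson volume form of $F$ equals the Riemannian volume of $\alpha$. Hence the S-curvature of $F$ with respect to $dV_{HT}$ is expressible through $\alpha$-covariant derivatives of $\beta$, and substituting the spherical symmetry ansatz collapses the isotropic S-curvature condition to an ODE system in $f,g,h$. Integrating the constraint on $f$ forces $f(r) = c/r^2$ for a positive constant $c$; this rigidity is what separates Theorem \ref{ctht} from Theorem \ref{ctbh}, where $f$ was free. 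Substituting back yields
\[
2h'\!\left(\tfrac{c}{r^2}+r^2g\right) - h\!\left(r^2g' - \tfrac{4c}{r^3}\right) = 0,
\]
and strong convexity of $F$ together with positivity of $\alpha$ give the stated inequality $g(r)-h(r)^2 > -c/r^4$, so case (3) follows. The converse direction is then a direct substitution of each of the three forms back into the characterized equation.

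The main obstacle will be this Randers calculation: specializing the general characterized PDE to the $(f,g,h)$ ansatz cleanly and extracting the $f(r)=c/r^2$ rigidity from the Holmes-Thompson condition without extraneous branches. The Berwald branch is quoted from Theorem \ref{ctbh}, and the Douglas + isotropic S-curvature $\Rightarrow$ Berwald or Randers dichotomy is cited from the preceding section, so the only genuinely new analytic work is concentrated in Step~3.
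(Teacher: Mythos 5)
Your proposal follows essentially the same route as the paper: the Douglas-plus-isotropic-S dichotomy from Theorem \ref{mt}, the Berwald branch carried over unchanged from Theorem \ref{ctbh}, and the Randers branch handled via the identity $dV_{HT}=dV_{\alpha}$, which (as the paper packages in Lemma \ref{src1}) forces $c=0$ and $\beta$ parallel, whence $h(rf'+2f)=0$ yields either $h=0$ (Riemannian) or $f=c/r^2$ together with the stated ODE for $g$ and $h$. This matches the paper's proof step for step, so no further comparison is needed.
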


\bigskip

\noindent{\bf Acknowledgements.} The author would like to thank  Professor Qun He and Guangzu Chen for their helpful discussion and the valuable comments.

\section{The S-curvature of the spherically symmetric Finsler metrics}
The notion of the S-curvature can be defined for an arbitrary given volume form  $dV=\sigma(x) dx$ on a Finsler manifold. There are two volume forms which are frequently encountered in Finsler geometry.  One is so-called the Busemann-Hausdorff volume form, the other is the Holmes-Thompson volume form. 

Let $F=F(x,y)$ be a Finsler metric on an $n$-dimensional manifold $M$. The Busemann-Hausdorff volume form $dV_{BH}=\sigma_{BH}(x)dx$ is given by
\[\sigma_{BH}(x)=\frac{Vol(B^n)}{Vol\{(y^i)\in R^n|F(x,y)<1\}}.\]
The Holmes-Thompson volume form $dV_{HT}=\sigma_{HT}(x)dx$ is defined as following:
\[\sigma_{HT}(x)=\frac{1}{Vol(B^n)}\int_{F(x,y)<1} det(g_{ij}(x,y))dy.\] 
In above definitions, the notation $Vol(B^n)$ means the Eucleadian volume of a unite ball in $R^n$. 
 
 The distortion $\tau=\tau(x,y)$ on $TM$ with respect to a given volume form $dV=\sigma(x) dx$ is defined by 
 \[\tau(x,y)=\ln \frac{\sqrt{det(g_{ij}(x,y))}}{\sigma(x)}.\] The S-curvature is the rate of change of the distortion along geodesics. More precisely, it is defined by
 \[S(x,y)=\frac{d}{dt}\big[\tau(c(t),\dot{c}(t))\big]|_{t=0},\]
 where $c(t)$ is the geodesic with $c(0)=x$ and $\dot{c}(0)=y$. $S(x,y)$ is positively homogeneous of degree one on the tangent vector $y$.
 
If we denote the spray coefficients $G^i$ of the Finsler metric $F$, then the S-curvature with respect to the volume form $dV=\sigma(x)dx$ can be formulated by
\begin{equation}\label{sf}
S=\frac{\partial G^m}{\partial y^m}-y^m\frac{\partial}{\partial x^m}(\ln \sigma).
\end{equation}

Usually it is not easy to explicitly work out the S-curvatures of the Busemann-Hausdorff volume form and the Holmes-Thompson volume form for an arbitrary Finsler metric.  However, a class of Finsler metric which is called $(\alpha,\beta)$ metric is computable for nearly all  quantities in Finsler geometry.  It is defined by a Rieman metric $\alpha$ and an 1-from $\beta$ on a manifold $M$ and includes many famous examples: the Randers metric, the square metric, the Matsumoto metric and so on.    X. Cheng and Z. Shen have obtained the formula of the $(\alpha,\beta)$ metric's S-curvatures of the Busemann-Hausdorff volume and the Holmes-Thompson \cite{CS}. 

A spherically symmetric Finsler metric generally can be expressed by $F=u\phi(r,s)$ where $u=|y|$, $r=|x|$ and $s=\frac{\langle x,y \rangle}{|y|}$ on an open set $\Omega$ in $R^n$ \cite{Zh1}. The metric tensor is given by 
\begin{eqnarray*}g_{ij}&=&\phi(\phi-s\phi_s)\delta_{ij}+(\phi_s^2+\phi\phi_{ss})x^ix^j+[s^2\phi\phi_{ss}-s(\phi-s\phi_s)\phi_s]\frac{y^i}{u}\frac{y^j}{u}\\
&&+[(\phi-s\phi_s)\phi_s-s\phi\phi_{ss}](x^i\frac{y^j}{u}+x^j\frac{y^i}{u}).
\end{eqnarray*}
The spray coefficients are formulated in terms of $\phi$ by \cite{MZ}
\[G^i=uPy^i+u^2Qx^i\]
where 
$$P=-\frac{1}{\phi}\big(s\phi+(r^2-s^2)\phi_s\big)Q+\frac{1}{2r\phi}(s\phi_r+r\phi_s)$$
and 
$$Q=\frac{1}{2r}\frac{-\phi_r+s\phi_{rs}+r\phi_{ss}}{\phi-s\phi_s+(r^2-s^2)\phi_{ss}}.$$

For a spherically symmetric Finsler metric, since it is invariant under any rotations in $R^n$, it is naturally to assume the given volume form is also invariant  under any rotations in $R^n$ which means 
\[dV= \sigma(r)dx \]
where $r=|x|$. According to (\ref{sf}),  we can direct compute the S-curvature of a spherically symmetric Finsler metric with respect to a given spherically symmetric volume form. 

\begin{lemma} \label{sfsfm}
Let $F=u\phi(r,s)$ be a spherically symmetric Finsler metric on an open set $\Omega$ in $R^n$ and $dV=\sigma(r) dx$ be a given spherically symmetric volume form. Suppose  that its spray coefficients are given by $G^i=uPy^i+u^2Qx^i$. The S-curvature of the given volume form is given by
\[S=(n+1)uP+u(r^2-s^2)Q_s+2usQ+uf(r)s\]
where $f(r)=-\frac{\sigma^{\prime}(r)}{r\sigma(r)}$.
\end{lemma}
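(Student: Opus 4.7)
The plan is to apply formula (\ref{sf}) directly to the spray coefficients $G^i = uPy^i + u^2Qx^i$ and to the volume density $\sigma(r)$, working out each piece by the chain rule. Since $P$ and $Q$ are functions of $(r,s)$ alone and $r$ does not depend on $y$, every $y$-derivative of $P$ or $Q$ reduces to $P_s$ or $Q_s$ times $\partial s/\partial y^m$. The basic ingredients I will need are
\[
\frac{\partial u}{\partial y^m}=\frac{y^m}{u},\qquad \frac{\partial s}{\partial y^m}=\frac{x^m}{u}-\frac{s\,y^m}{u^2},
\]
together with the contractions $y^m y^m = u^2$, $x^m y^m = us$, $x^m x^m = r^2$.

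Next I would split $\partial G^m/\partial y^m$ into the two natural pieces. For the $uPy^m$ piece, the product rule gives three summands; the $(\partial_m u)Py^m$ summand contributes $uP$, the $u(\partial_m P)y^m$ summand is proportional to $P_s(s-s)=0$ because the contractions against $y^m$ cancel, and the $uP\delta^m_m$ summand gives $nuP$, totaling $(n+1)uP$. For the $u^2 Q x^m$ piece, differentiating $u^2$ produces $2y^m$ which contracts with $x^m$ to give $2usQ$; differentiating $Q$ via $Q_s$ contracts against $x^m$ to give $u(r^2-s^2)Q_s$; differentiating $x^m$ gives zero. Summing yields
\[
\frac{\partial G^m}{\partial y^m}=(n+1)uP+u(r^2-s^2)Q_s+2usQ.
\]

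The remaining term $-y^m\partial_{x^m}(\ln\sigma)$ is immediate: since $\sigma=\sigma(r)$, one has $\partial_{x^m}(\ln\sigma)=\frac{\sigma'(r)}{\sigma(r)}\frac{x^m}{r}$, so
\[
-y^m\frac{\partial \ln\sigma}{\partial x^m}=-\frac{\sigma'(r)}{r\sigma(r)}\,us=uf(r)s.
\]
Adding the two contributions gives exactly the claimed expression.

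There is no genuine obstacle here; the only point requiring a bit of care is the chain-rule bookkeeping when differentiating $P(r,s)$ and $Q(r,s)$ with respect to $y^m$ through $s$, and noticing that the contraction of $\partial s/\partial y^m$ with $y^m$ vanishes (so the $P_s$-terms contribute nothing) while its contraction with $x^m$ produces the factor $(r^2-s^2)/u$ that is responsible for the $Q_s$-term. Everything else is a tidy tensor-index computation.
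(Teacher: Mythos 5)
Your computation is correct and follows essentially the same route as the paper: a direct application of formula (\ref{sf}) to $G^i=uPy^i+u^2Qx^i$ using $\partial u/\partial y^m=y^m/u$ and $\partial s/\partial y^m=x^m/u-sy^m/u^2$, with the $P_s$-terms cancelling upon contraction with $y^m$ and the $Q_s$-term producing $u(r^2-s^2)Q_s$. The paper merely writes out the cancelling $P_s$ contributions explicitly before they drop out, whereas you observe the cancellation up front; there is no substantive difference.
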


\begin{proof} 
Since the spray coefficients  $G^i=uPy^i+u^2Qx^i$, one can get
\begin{eqnarray*}
\frac{\partial G^m}{\partial y^m}&=&\frac{\partial }{\partial y^m}(uPy^m+u^2Qx^m)\\
&=&uP\delta^{m}_{\ m}+(\frac{P}{u}-\frac{s}{u}P_s)y^my^m+P_sx^my^m+uQ_sx^mx^m+(2Q-sQ_s)x^my^m\\
&=&uPn+(\frac{P}{u}-\frac{s}{u}P_s)u^2+P_sus+uQ_sr^2+(2Q-sQ_s)us\\
&=&(n+1)uP+u(r^2-s^2)Q_s+2usQ.
\end{eqnarray*}
Furthermore, we have
\begin{eqnarray*}
y^m\frac{\partial}{\partial x^m}(\ln \sigma(r))&=&y^m\frac{\sigma^{\prime}(r)}{\sigma(r)}\frac{x^m}{r}\\
&=&u\frac{\sigma^{\prime}(r)}{r\sigma(r)}s.
\end{eqnarray*} 

Plugging above two equations into S-curvature's formula (\ref{sf}) will obtain the  result. 
\end{proof}

Similar to the calculation of the Busemann-Hausdorff volume form and the Holmes-Thompson volume form of the $(\alpha,\beta)$ metric in X. Cheng and Z. Shen's paper, we have following lemma for a spherically symmetric Finsler metric:
\begin{lemma}Let $F=u\phi(r,s)$ be a spherically symmetric Finsler metric on an open set $\Omega$ in $R^n$. Denote $dV_{BH}=\sigma_{BH}(x)dx$ and $dV_{HT}=\sigma_{HT}(x)dx$. Then 
\[\sigma_{BH}(r)=\frac{\int_{0}^{\pi}\sin ^{n-2} t dt}{\int_{0}^{\pi} \frac{\sin^{n-2}t }{\phi(r, r\cos t)^n}dt},\quad \sigma_{HT}(r)=\frac{\int_{0}^{\pi} (\sin^{n-2}t)T(r, r\cos t)dt}{\int_{0}^\pi \sin^{n-2}t dt}\]
where $T(r,s):=\phi(\phi-s\phi_s)^{n-2}[(\phi-s\phi_s)+(r^2-s^2)\phi_{ss}]$.
\end{lemma}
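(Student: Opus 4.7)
The plan is to compute both volume densities directly from their definitions, exploiting the rotational symmetry of $F$ to reduce each $n$-dimensional integral to a $1$-parameter integral. Because $F = u\phi(r,s)$ is $O(n)$-invariant, the integrands depend on $y$ only through $u=|y|$ and the angle $t$ between $y$ and $x$. Fixing $x = (r,0,\ldots,0)$ without loss of generality and using polar-type coordinates $y = u(\cos t, \sin t\,\omega)$ with $\omega\in S^{n-2}$, one has $dy = u^{n-1}\sin^{n-2}t\, du\, dt\, d\omega$, and moreover $s = \langle x,y\rangle/|y| = r\cos t$. Hence the unit-ball condition $F(x,y) < 1$ becomes simply $u < 1/\phi(r, r\cos t)$.

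For the Busemann-Hausdorff case, I would first evaluate
\[
\mathrm{Vol}\{F<1\} = \mathrm{Vol}(S^{n-2})\int_0^\pi \sin^{n-2}t\,dt\int_0^{1/\phi(r,r\cos t)} u^{n-1}\,du = \frac{\mathrm{Vol}(S^{n-2})}{n}\int_0^\pi \frac{\sin^{n-2}t}{\phi(r,r\cos t)^n}\,dt,
\]
and compare with $\mathrm{Vol}(B^n) = \frac{\mathrm{Vol}(S^{n-2})}{n}\int_0^\pi \sin^{n-2}t\,dt$. The ratio defining $\sigma_{BH}$ immediately collapses to the stated formula.

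For the Holmes-Thompson case, the extra ingredient is the identity
\[
\det\bigl(g_{ij}(x,y)\bigr) = \phi(r,s)^n\,T(r,s).
\]
Since $g_{ij}$ is $0$-homogeneous in $y$, this determinant depends only on $r$ and $t$ (through $s=r\cos t$), so once it is established, the same radial-angular integration as above gives
\[
\int_{F<1}\det(g_{ij})\,dy = \frac{\mathrm{Vol}(S^{n-2})}{n}\int_0^\pi \sin^{n-2}t\,T(r,r\cos t)\,dt,
\]
and dividing by $\mathrm{Vol}(B^n)$ yields the claim. To prove the determinant identity, I would use the structure of $g_{ij}$ displayed earlier: it has the form $A\,\delta_{ij} + (\text{rank-}2\text{ matrix on }\mathrm{span}\{x,y\})$ with $A=\phi(\phi-s\phi_s)$. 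Choosing coordinates in which $x$ and $y$ have only their first two components nonzero, $g_{ij}$ becomes block diagonal with an $(n-2)\times(n-2)$ block $A\,I_{n-2}$ and a $2\times 2$ block. Then $\det(g_{ij}) = A^{n-2}\det(\text{$2\times 2$ block})$, and a direct computation of the $2\times 2$ determinant (using $|x|^2=r^2$, $\langle x,y\rangle=us$, $|y|^2=u^2$, together with the explicit coefficients) produces $A^{n-2}\cdot \phi^2[(\phi-s\phi_s)+(r^2-s^2)\phi_{ss}] = \phi^n T(r,s)$.

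The main obstacle is this determinant calculation: the matrix-determinant lemma (or equivalently the Sylvester identity) must be applied carefully because two rank-one perturbations acting on the non-orthogonal pair $\{x, y/u\}$ are present, and the cross terms involving $\phi_s$ and $\phi_{ss}$ must be tracked precisely to recover the factor $(\phi-s\phi_s)+(r^2-s^2)\phi_{ss}$. Everything else is straightforward integration. Once the determinant formula is in hand, both assertions of the lemma fall out from the same polar decomposition.
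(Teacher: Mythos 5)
Your proposal is correct and follows the same route the paper implicitly relies on (the proof is omitted there with a pointer to Cheng--Shen's analogous computation for $(\alpha,\beta)$-metrics): polar coordinates adapted to $x$ reduce both integrals to the angle $t$ via $s=r\cos t$ and $u<1/\phi$, and the identity $\det(g_{ij})=\phi^{n}T(r,s)$ --- which checks out via the block form $g=A\,I+USU^{T}$ with $U=[x,\ y/u]$, $A=\phi(\phi-s\phi_s)$ --- makes the factor $\phi^{-n}$ from the radial integration cancel exactly as needed. One small slip in your final display: the $2\times2$ block determinant equals $\phi^{3}\bigl[(\phi-s\phi_s)+(r^{2}-s^{2})\phi_{ss}\bigr]$, not $\phi^{2}\bigl[\cdots\bigr]$, so that $\det(g_{ij})=A^{n-2}\phi^{3}\bigl[\cdots\bigr]=\phi^{n}T$ as you correctly asserted earlier; with the exponent $2$ the written equality would be off by a factor of $\phi$.
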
 
The proof is omitted here and one can consult \cite{CS}  for the details.  

\begin{remark} Obviously, both these two volume forms are spherically symmetric and the S-curvature can be computed by using Lemma \ref{sfsfm}.
\end{remark}

A Finsler metric $F$ on an $n$-dimensional manifold $M$ is said to have \emph{isotropic S-curvature} with respect to the given volume form $dV=\sigma(x)dx$ if there exits a function $c=c(x)$ on $M$ so that
\[S=(n+1)cF.\]
$F$ is said to have \emph{constant S-curvature} if $c=constant$.

By combining the definition of isotropic S-curvature and Lemma \ref{sfsfm}, the characterized equation of the spherically symmetric Finsler metrics with isotropic S-curvature can be easily derived. 
\begin{theorem} \label{tsf}
Let $F=u\phi(r,s)$ be a spherically symmetric Finsler metric on an open set $\Omega$ in $R^n$ and $dV=\sigma(r) dx$ be a given spherically symmetric volume form. Suppose  that its spray coefficients are given by $G^i=uPy^i+u^2Qx^i$. Then $F$ has isotropic S-curvature if and only if  the equation
\[(n+1)P+(r^2-s^2)Q_s+2sQ+f(r)s=(n+1)c\phi\]
holds where $f(r)=-\frac{\sigma^{\prime}(r)}{r\sigma(r)}$ and $c=c(r)$ is a scalar function.
\end{theorem}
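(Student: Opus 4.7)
The plan is to treat this as a direct consequence of Lemma \ref{sfsfm} combined with the definition of isotropic S-curvature, with one subtlety about the dependence of the conformal factor $c$.

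First, I would start from the definition: $F$ has isotropic S-curvature iff there is a scalar function $c=c(x)$ on $\Omega$ such that $S=(n+1)cF$. Using $F=u\phi(r,s)$ and the explicit expression for $S$ provided by Lemma \ref{sfsfm}, I would substitute to get
\[
(n+1)uP+u(r^2-s^2)Q_s+2usQ+uf(r)s=(n+1)c(x)\,u\phi(r,s).
\]
Dividing by $u$ (which is nonzero away from the zero section) immediately yields the displayed equation, up to verifying that $c$ is a function of $r$ only.

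The second step is to argue that $c$ depends only on $r$. Here I would use the spherical symmetry: the quantities $\phi$, $P$, $Q$ (and $f$) all depend solely on the rotation invariants $r=|x|$ and $s=\langle x,y\rangle/|y|$. Rearranging the equation to
\[
c(x)=\frac{1}{(n+1)\phi(r,s)}\Bigl[(n+1)P+(r^2-s^2)Q_s+2sQ+f(r)s\Bigr],
\]
the right-hand side is a function of $(r,s)$ alone, while the left-hand side is a function of $x$ alone and is in particular independent of $y$ (hence of $s$). Therefore both sides must in fact depend only on $r$, giving $c=c(r)$.

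The converse direction is just the same computation run in reverse: given the displayed equation with $c=c(r)$, multiplying by $u$ and using Lemma \ref{sfsfm} gives $S=(n+1)c(r)\,u\phi=(n+1)c(r)F$, which is exactly isotropic S-curvature with conformal factor $c(r)$ (viewed as a function on $\Omega$).

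The only nontrivial point worth emphasizing in the writeup is the reduction from $c(x)$ to $c(r)$; the rest is purely substitution. There is no analytic or geometric obstacle beyond this, since the heavy lifting — producing the closed form for $S$ — was already accomplished in Lemma \ref{sfsfm}.
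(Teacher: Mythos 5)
Your proposal is correct and follows essentially the same route as the paper: combine Lemma \ref{sfsfm} with the definition of isotropic S-curvature, divide by $u$, and use the rotational invariance of the left-hand side (a function of $(r,s)$ only) against the $y$-independence of $c(x)$ to conclude $c=c(r)$. Your writeup of the reduction from $c(x)$ to $c(r)$ is in fact slightly more explicit than the paper's one-line remark, but the argument is the same.
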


\begin{proof}  Lemma \ref{sfsfm} and the definition of isotropic S-curvature imply 
\[(n+1)P+(r^2-s^2)Q_s+2sQ+f(r)s=(n+1)c(x)\phi\]
where $c(x)$ is a function on $\Omega$.  The spherical symmetry of the RHS in above equation provides $c(x)$ is a function of $r=|x|$. 
\end{proof}

\begin{remark} The characterized equation in above theorem actually is a 3-order partial differential equation of $\phi$ by plugging the formula $P$ and $Q$ into the LHS. 
\end{remark}

\section{The rigidity of the spherically symmetric Douglas metrics with isotropic S-curvature}

In previous section,  we know the characterized equation of the spherically symmetric Finsler metrics with isotropic S-curvature. A natural question emerges:  how to classify them? It is not easy to solve the equation directly since it is 3-order nonlinear pde.   However, under a certain assumption:  Douglas type, we can completely determine this kind of metrics.  

The Douglas metric can be regarded as the generalization of Berwald metric and was first introduced in \cite{BM}. A Finsler metric is called \emph{Douglas metric} if it's spray coefficients satisfy
\[G^i=\frac{1}{2}\Gamma^i_{\ jk}y^jy^k+P(x,y)y^i\]
in a local coordinate, which means it is locally projectively related to a Berwald metric. 

For a spherically symmetric Douglas metric,  Mo, Sol—rzano and Tenenblat  yield the following equivalent condition on the spray coefficients: 
\begin{lemma} \cite{MST}
\label{ldm}
Let $F=u\phi(r,s)$ be a spherically symmetric Finsler metric on an open set $\Omega$ in $R^n$. Suppose  that its spray coefficients are given by $G^i=uPy^i+u^2Qx^i$. Then $F$ is a Douglas metric if and only if 
$Q=c_1(r)+c_2(r)s^2$. 
\end{lemma}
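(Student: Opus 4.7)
The plan is to invoke the classical characterization (due to B\'acs\'o--Matsumoto) that a Finsler metric $F$ is a Douglas metric if and only if, at each fixed $x$, the tensor $G^i y^j - G^j y^i$ is a homogeneous polynomial of degree $3$ in the fiber coordinates $y$. Substituting the prescribed form $G^i = uPy^i + u^2Qx^i$, the $uP$-contributions cancel by antisymmetry in $(i,j)$, leaving the clean expression
\[G^i y^j - G^j y^i = u^2 Q(r,s)\,(x^i y^j - x^j y^i).\]
For $x \neq 0$ one can pick indices $i, j$ so that $x^i y^j - x^j y^i$ is a nonzero irreducible linear polynomial in $y$; consequently the Douglas condition reduces to the requirement that $u^2 Q(r, s)$, viewed as a function of $y$ with $x$ held fixed, be itself a homogeneous polynomial of degree $2$ in $y$.

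The forward implication is then closed by exploiting spherical symmetry. The function $u^2 Q$ depends on $y$ only through the two $SO(n-1)$-invariants $|y|^2$ and $\langle x, y\rangle$, where $SO(n-1)$ denotes the stabilizer of $x$. Among polynomials in these two invariants, the only ones that are homogeneous of degree $2$ in $y$ are linear combinations of $|y|^2$ and $\langle x, y\rangle^2$; any other candidate (e.g.\ $|y|^2 \cdot \langle x, y\rangle$) has the wrong degree. Hence
\[u^2 Q(r,s) = A(x)\,|y|^2 + B(x)\,\langle x,y\rangle^2 = u^2\bigl(A(x) + B(x)\,s^2\bigr),\]
and the spherical symmetry of $Q$ forces $A$ and $B$ to depend only on $r = |x|$, yielding $Q(r,s) = c_1(r) + c_2(r)s^2$. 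The converse is immediate: if $Q = c_1(r) + c_2(r)s^2$, then $u^2Q = c_1(r)|y|^2 + c_2(r)\langle x,y\rangle^2$ is manifestly a polynomial of degree $2$ in $y$, so $G^i y^j - G^j y^i$ is a polynomial of degree $3$, and $F$ is Douglas.

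The main subtlety I expect is the algebraic step deducing that $u^2 Q$ itself is polynomial in $y$ from the polynomiality of $u^2 Q \cdot (x^i y^j - x^j y^i)$. Since $u^2 Q$ is smooth on $\{y \neq 0\}$ and coincides, off the hyperplane $\{x^i y^j - x^j y^i = 0\}$, with the rational function obtained by dividing the polynomial $G^i y^j - G^j y^i$ by $x^i y^j - x^j y^i$, smoothness across this zero locus together with the irreducibility of the denominator and unique factorization forces the denominator to divide the numerator cleanly, so the quotient $u^2 Q$ is itself a polynomial. Once this algebraic point is settled, the invariant-theoretic argument completes the proof.
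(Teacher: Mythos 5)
Your proposal is correct, but it is not the argument the paper relies on: the paper simply cites Mo--Sol\'orzano--Tenenblat, who prove the lemma by computing the Douglas tensor of $F=u\phi(r,s)$ explicitly and setting it to zero; the paper only appends a one-sentence heuristic (``$u^2Q$ must be a quadratic form in $y$'') without justification. What you have done is turn that heuristic into an actual proof via the B\'acs\'o--Matsumoto characterization that $F$ is Douglas iff $G^iy^j-G^jy^i$ are homogeneous cubic polynomials in $y$. The computation $G^iy^j-G^jy^i=u^2Q\,(x^iy^j-x^jy^i)$ is right, and you correctly identify and resolve the one genuine subtlety: since $u^2Q$ is smooth on $\{y\neq 0\}$ and the linear form $x^iy^j-x^jy^i$ (nonzero for a suitable choice of $i,j$ when $x\neq 0$) is irreducible, divisibility follows from boundedness of the quotient across the hyperplane, so $u^2Q$ is itself a homogeneous quadratic polynomial. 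The invariant-theory step is also sound, with one cosmetic caveat: for $n=2$ the stabilizer $SO(n-1)$ is trivial, so you should instead use the full stabilizer $O(n-1)$ (the reflection fixing $x$), under which any function of $|y|$ and $\langle x,y\rangle$ is automatically invariant; a quadratic form invariant under $O(n-1)$ is $a|y|^2+b\langle x,y\rangle^2$ in every dimension $n\geq 2$. Compared with the Douglas-tensor computation in the cited reference, your route is shorter and more conceptual, at the price of importing the cubic-polynomial characterization as a black box and of the extra algebraic divisibility argument; the tensor computation is more mechanical but entirely self-contained. Either way the converse is immediate, as you note.
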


They calculated the Douglas tensor and utilized it vanishing to draw above conclusion. Actually, by directly observing the $G^i=uPy^i+u^2Qx^i$ and applying it projectively related to a Berwald metric, it can be deduced that $u^2Q$ must the quadratic form of the tangent vector $y$. This leads to $Q$ to be the form of $c_1(r) + c_2(r)s^2$.  

If we restrict to a spherically symmetric Douglas metric with isotropic S-curvature, it has a rigidity property. More precisely, the following theorem holds: 

\begin{theorem} \label{mt}
 Let $F$ be a spherically symmetric Finsler metric on an open set $\Omega$ in $R^n$ and  $dV=\sigma(r) dx$ be a given spherically symmetric volume form.  If $F$ is a Douglas metric and has isotropic S-curvature with respect to the volume form $dV$, then either 
 \begin{enumerate}
 \item $F$ is a Randers metric (Riemann metric included)
  or 
\item $F$ is a Berwald metric which can be formulated by $$F=u\psi(\frac{s^2}{g(r)+s^2\int 4rc_2(r)g(r)dr})e^{-\int (\frac{2}{r}-2r^3c_2(r))dr}s,$$ 
where $c_2(r)$ is a smooth function and $g(r)=e^{\int (\frac{2}{r}-4r^3c_2(r))    dr}$.
\end{enumerate}
\end{theorem}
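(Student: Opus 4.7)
My plan is to combine the Douglas condition of Lemma \ref{ldm} with the characterised equation of Theorem \ref{tsf} into a single first-order PDE for $\phi(r,s)$, and then to show by analysing its structure that the only solutions fall into the two families listed.

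First I would substitute $Q=c_1(r)+c_2(r)s^2$ into the equation of Theorem \ref{tsf}. The piece $(r^2-s^2)Q_s+2sQ$ simplifies to $2c_1s+2c_2r^2s$ (the cubic-in-$s$ terms cancel), leaving
\[
(n+1)P+\bigl[2c_1+2c_2r^2+f(r)\bigr]s=(n+1)c(r)\phi.
\]
Substituting the spherically symmetric formula for $P$ from Section~2 and clearing denominators by multiplying through by $2r\phi$, I arrive at a quasilinear first-order PDE of the form
\[
s\phi_r+r\bigl[1-2(c_1+c_2s^2)(r^2-s^2)\bigr]\phi_s=\Psi(r,s)\phi+2rc(r)\phi^2,
\]
with $\Psi(r,s)$ an explicit polynomial in $s$ whose coefficients depend on $c_1,c_2,f$. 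The substitution $\phi=1/\tilde\psi$ linearises this to an inhomogeneous first-order PDE, and the associated characteristic ODE in the variable $v=s^2$ is the Riccati equation $dv/dr=2r-4r(c_1+c_2v)(r^2-v)$.

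The argument then splits into two branches according to the nature of the projective coefficient $P$. In the \emph{Berwald branch}, $P$ is linear in $s$, say $P=K(r)s$, so that $uPy^i$ is quadratic in $y$; the characterised equation then forces $c\equiv 0$ together with $(n+1)K+2c_1+2c_2r^2+f=0$, and the PDE for $\phi$ collapses to a linear transport equation. I expect the Riccati to admit the explicit first integral
\[
\mu(r,s)=\frac{s^2}{g(r)+s^2\int 4rc_2(r)g(r)\,dr},\qquad g(r)=\exp\!\int\!\Bigl(\frac{2}{r}-4r^3c_2(r)\Bigr)dr,
\]
while integration of the transport equation for $\phi$ along characteristics contributes the prefactor $s\,e^{-\int(\frac{2}{r}-2r^3c_2)dr}$. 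The general solution is then $\phi=\psi(\mu)\,s\,e^{-\int(\frac{2}{r}-2r^3c_2)dr}$ for an arbitrary smooth $\psi$, which is exactly case~(2) of the theorem. In the complementary \emph{non-Berwald branch}, the higher-order-in-$s$ pieces of $P$ survive, and matching the polynomial-in-$s$ structure of the PDE forces $\phi^2-(h(r)s)^2$ to be polynomial of degree at most two in $s$. This is precisely the Randers ansatz $\phi=\sqrt{A(r)+B(r)s^2}+h(r)s$, with the Riemannian case $h\equiv 0$ included.

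The chief obstacle will be the dichotomy itself: showing that every solution must fall into one of these two branches, and ruling out mixed behaviour. I expect this to follow by treating the PDE as a polynomial identity in $s$ after algebraic elimination of $\phi_s/\phi$ and $\phi_r/\phi$; the Douglas constraint that $Q$ is at most quadratic in $s^2$, combined with the $\phi^2$-term on the right-hand side, should leave only the two algebraic shapes for $\phi$. Once the dichotomy is in place, the Riccati integrations that produce $g(r)$ and the explicit exponential prefactor become a routine ODE computation.
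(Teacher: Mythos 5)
Your setup matches the paper's: substituting $Q=c_1+c_2s^2$ into Theorem \ref{tsf} (your simplification $(r^2-s^2)Q_s+2sQ=2c_1s+2c_2r^2s$ is correct) gives $P=c(r)\phi+b(r)s$ with $b=-\tfrac{1}{n+1}[2c_1+2c_2r^2+f]$, and feeding this into the formula for $P$ yields exactly the first-order quasilinear PDE you write down. Your treatment of the Berwald branch is also essentially the paper's: in the degenerate case the PDE becomes a linear transport equation, the characteristic ODE in $v=s^2$ is the Bernoulli equation $\frac{dv}{dr}=(\tfrac{2}{r}-4c_2r^3)v+4c_2rv^2$, and integrating along characteristics produces precisely the first integral $\mu$ and the prefactor $s\,e^{-\int(\frac{2}{r}-2r^3c_2)dr}$ of case (2) (this is the paper's Lemma \ref{So}).

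The genuine gap is in the Randers branch, and it is the gap you yourself flag. A single first-order PDE in two variables has a general solution depending on an arbitrary function of one first integral, so it cannot by itself force $\phi$ into the algebraic shape $\sqrt{A(r)+B(r)s^2}+h(r)s$; and your proposed fix --- reading the PDE as ``a polynomial identity in $s$'' --- is circular, since $\phi$ is not known to be algebraic in $s$ at that stage. What you are missing is that the Douglas condition is itself a \emph{second} PDE on $\phi$, namely $\frac{1}{2r}\cdot\frac{-\phi_r+s\phi_{rs}+r\phi_{ss}}{\phi-s\phi_s+(r^2-s^2)\phi_{ss}}=c_1+c_2s^2$, and the paper uses both equations as a system: differentiating the $P$-equation in $s$, subtracting the $Q$-equation, and recombining with the $P$-equation eliminates $\phi_r$ entirely and leaves the exact $s$-derivative
\[
\Big[r\big(2c_1(r^2-s^2)-1-bs^2\big)\phi^{-2}\Big]_s-\big(4crs\,\phi^{-1}\big)_s=0,
\]
which integrates (for each fixed $r$) to a quadratic equation in $1/\phi$ and hence gives the Randers form whenever the coefficient $r\big(2c_1(r^2-s^2)-1-bs^2\big)$ is not identically zero. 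Your dichotomy should therefore be on the vanishing of this coefficient, not on whether ``$P$ is linear in $s$''; the latter misses that $c=0$ with nonvanishing coefficient still lands in the Randers/Riemannian case, and you also omit the excluded sub-case (coefficient $\equiv 0$, $c\neq 0$), which produces a singular Kropina metric ruled out by regularity.
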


Before proving Theorem \ref{mt},  we need a Lemma. 

\begin{lemma}  \label{So}
Suppose there is the following 1-order non-homogenous  linear partial differential equation   
\[\frac{\partial \phi(r,s)}{\partial r}+\big(\frac{s}{r}-2rc_2(r)(r^2-s^2)s\big)\frac{\partial \phi(r,s)}{\partial s}=-(\frac{1}{r}-2rc_2(r)s^2)\phi.\]
Then its general solution is given by
\[\phi=\psi(\frac{s^2}{g(r)+s^2\int 4rc_2(r)g(r)dr})e^{-\int (\frac{2}{r}-2r^3c_2(r))dr}s,\]
where $g(r)=e^{\int (\frac{2}{r}-4r^3c_2(r))    dr}$.
\end{lemma}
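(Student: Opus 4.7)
The plan is to solve this first-order linear PDE by the method of characteristics, preceded by two preliminary transformations that peel off the zeroth-order term and reduce the problem to a pure transport equation.

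First I would substitute $\phi = s\,\tilde\phi$, so $\phi_r = s\tilde\phi_r$ and $\phi_s = \tilde\phi + s\tilde\phi_s$. After plugging in, dividing through by $s$, and collecting terms, the cross-term $(\tfrac{1}{r}-2rc_2(r)(r^2-s^2))\tilde\phi$ produced by $\phi_s = \tilde\phi + s\tilde\phi_s$ combines with the original right-hand side; the two $s$-dependent pieces, $2rc_2(r)s^2$ and $-2rc_2(r)(r^2-s^2)$, cancel up to a multiple of $r^2$, leaving
$$\tilde\phi_r + \Bigl(\tfrac{s}{r} - 2rc_2(r)(r^2-s^2)s\Bigr)\tilde\phi_s = -\Bigl(\tfrac{2}{r} - 2r^3 c_2(r)\Bigr)\tilde\phi.$$
This collapse of the $s$ dependence on the right is the algebraic miracle that makes a clean solution possible.

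Next I would absorb the remaining $r$-dependent coefficient into the multiplier $E(r) = e^{-\int(\tfrac{2}{r}-2r^3 c_2(r))\,dr}$ by writing $\tilde\phi = E(r)\hat\phi$. Since $E'/E = -(\tfrac{2}{r}-2r^3 c_2(r))$, the equation reduces to the homogeneous transport equation
$$\hat\phi_r + \Bigl(\tfrac{s}{r} - 2rc_2(r)(r^2-s^2)s\Bigr)\hat\phi_s = 0,$$
so $\hat\phi$ must be a function of any first integral of the characteristic ODE $\tfrac{ds}{dr} = \tfrac{s}{r} - 2rc_2(r)(r^2-s^2)s$.

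To find that first integral, I would rewrite the characteristic ODE as the Bernoulli equation $\tfrac{ds}{dr} - (\tfrac{1}{r} - 2r^3 c_2(r))s = 2rc_2(r)s^3$ and linearize it with $v = s^{-2}$, producing $\tfrac{dv}{dr} + (\tfrac{2}{r} - 4r^3 c_2(r))v = -4rc_2(r)$. By the very definition of $g(r)$ this linear ODE has $g$ as its integrating factor, so $(g\,v)' = -4rc_2(r)g(r)$, and one quadrature yields the conserved quantity $g(r)/s^2 + \int 4rc_2(r)g(r)\,dr$, which is equivalent (after inversion) to $s^2/\bigl(g(r)+s^2\int 4rc_2(r)g(r)\,dr\bigr)$. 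Setting $\hat\phi$ equal to an arbitrary smooth function $\psi$ of this first integral and unwinding $\phi = s\,E(r)\,\hat\phi$ gives precisely the stated formula.

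The main (and really the only) conceptual step is anticipating the initial substitution $\phi = s\,\tilde\phi$: it is forced by the form of the answer but is not evident a priori. Every subsequent computation is either a standard integrating-factor move or a textbook Bernoulli linearization, so I expect no serious obstacle once the first substitution is in hand.
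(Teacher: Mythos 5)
Your proposal is correct and is essentially the paper's own argument: both proofs are the method of characteristics, hinging on the same Bernoulli equation $\frac{d}{dr}(s^{-2})+(\frac{2}{r}-4r^3c_2)s^{-2}=-4rc_2$ with integrating factor $g(r)$ to produce the first integral $\frac{s^2}{g(r)+s^2\int 4rc_2g\,dr}$. Your preliminary substitution $\phi=s\,e^{-\int(\frac{2}{r}-2r^3c_2)dr}\hat\phi$ is just the paper's second first integral $\ln\frac{s}{\phi}-\int(\frac{2}{r}-2c_2r^3)dr=\mathrm{const}$ applied up front, so the two derivations differ only in packaging.
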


\begin{proof}
It's characteristic equation is 
\[\frac{dr}{1}=\frac{ds}{\frac{s}{r}-2rc_2(r)(r^2-s^2)s}=\frac{d\phi}{-(\frac{1}{r}-2rc_2(r)s^2)\phi}.\]
From $\frac{dr}{1}=\frac{ds}{\frac{s}{r}-2rc_2(r)(r^2-s^2)s}$, we know that
\[\frac{d}{dr}(s^2)=(\frac{2}{r}-4c_2r^3)s^2+4c_2rs^4.\]
This is a Bernoulli equation and can be rewritten as
\[\frac{d}{dr}(\frac{1}{s^2})=(-\frac{2}{r}+4c_2r^3)\frac{1}{s^2}-4c_2r.\]
Thus above equation turns into a linear 1-order ODE of $\frac{1}{s^2}$.  One can easily get its solution
\[\frac{1}{s^2}=e^{\int (-\frac{2}{r}+4c_2r^3)dr}(c-\int 4c_2re^{\int (\frac{2}{r}-4c_2r^3)dr}  dr).\]
Therefore, one first integral of the original equation can be chosen to be
 \[\frac{s^2}{e^{\int (\frac{2}{r}-4c_2r^3)dr}+s^2\int 4c_2re^{\int (\frac{2}{r}-4c_2r^3)dr}dr}=\frac{1}{c}.\]
 In order to get another independent first integral, from the characteristic equation, we notice that
 \[\frac{d \ln s}{-\frac{1}{r}+2c_2r^3-2c_2rs^2}=\frac{dr}{-1}=\frac{d \ln \phi}{\frac{1}{r}-2c_2rs^2}.\]
 It implies that
 \[\frac{d \ln s - d \ln \phi}{-\frac{2}{r}+2c_2r^3}=\frac{dr}{-1}.\]
 Integrating above equation yields
 \[\ln \frac{s}{\phi}-\int(\frac{2}{r}-2c_2r^3)dr=c.\]
 Obviously, it can be selected as another independent first integral. Hence the general solution of the original equation can be expressed by 
   \[\Psi\big(\frac{s^2}{e^{\int (\frac{2}{r}-4c_2r^3)dr}+s^2\int 4c_2re^{\int (\frac{2}{r}-4c_2r^3)dr}dr},\ln \frac{s}{\phi}-\int(\frac{2}{r}-2c_2r^3)dr\big)=0.\]
From above equality, one may solve that
\[\phi=\psi(\frac{s^2}{g(r)+s^2\int 4rc_2(r)g(r)dr})e^{-\int ( \frac{2}{r}-2r^3c_2(r)) dr}s,\]
where $g(r)=e^{\int (\frac{2}{r}-4r^3c_2(r))    dr}$.
\end{proof}

\begin{proof} [Proof of Theorem \ref{mt}]Lemma \ref{ldm} tells that the spray coefficients $G^i=uPy^i+u^2Qx^i$ of $F=u\phi(r,s)$ must satisfy
\[Q=c_1(r)+c_2(r)s^2.\]
Meanwhile, by Theorem \ref{tsf}  the isotropy of the S-curvature implies 
\[(n+1)P+(r^2-s^2)Q_s+2sQ+f(r)s=(n+1)c(r)\phi.\]
Therefore, $P$ can be written as 
\[P=c(r)\phi+b(r)s\]
where $b(r):=-\frac{1}{n+1}[2c_1(r)+2c_2(r)r^2+f(r)]$.

Now we are ready to use the technique developed in \cite{MZ} to determine the metric $F$.  Since 
$$P=-\frac{1}{\phi}\big(s\phi+(r^2-s^2)\phi_s\big)Q+\frac{1}{2r\phi}(s\phi_r+r\phi_s)$$
and 
$$Q=\frac{1}{2r}\frac{-\phi_r+s\phi_{rs}+r\phi_{ss}}{\phi-s\phi_s+(r^2-s^2)\phi_{ss}},$$
one gets a equation system
\begin{equation*}
\left\{ \begin{array}{l}
-\frac{1}{\phi}\big(s\phi+(r^2-s^2)\phi_s\big)(c_1(r)+c_2(r)s^2)+\frac{1}{2r\phi}(s\phi_r+r\phi_s)=c(r)\phi+b(r)s \\
 \frac{1}{2r}\frac{-\phi_r+s\phi_{rs}+r\phi_{ss}}{\phi-s\phi_s+(r^2-s^2)\phi_{ss}}=c_1(r)+c_2(r)s^2.   
 \end{array} \right.
          \end{equation*}
Rewriting the system will provide
\begin{equation} \label{eq1}
\left\{ \begin{array}{l}
r[2(r^2-s^2)(c_1+c_2s^2)-1]\phi_s-s\phi_r+2[brs+rs(c_1+c_2s^2)]\phi+2rc\phi^2=0\\
r[2(r^2-s^2)(c_1+c_2s^2)-1]\phi_{ss}-s\phi_{rs}+\phi_r+2r(c_1+c_2s^2)(\phi-s\phi_s)=0.
 \end{array} \right.
          \end{equation}         
Differentiating the first equation with respect to variable $s$ and subtracting the second equation, we obtain
\[  [brs+2c_2r(r^2-s^2)s]\phi_s-\phi_r+[br+2c_2rs^2]\phi+2cr\phi\phi_s=0. \]
By combining this equation and the first equation in (\ref{eq1}),  it can be deduced that
\[r[2c_1(r^2-s^2)-1-bs^2]\phi_s-2crs\phi\phi_s+(b+2c_1)rs\phi+2cr\phi^2=0.\]
Actually, above equation can be written as
\begin{equation}\label{eq2}
\big[r\big(2c_1(r^2-s^2)-1-bs^2\big)\frac{1}{\phi^2}\big]_s- (4crs\frac{1}{\phi})_s=0.
\end{equation}

If $r\big(2c_1(r^2-s^2)-1-bs^2\big)\neq 0$, integrating the identity will solve $\phi$ to obtain that $F=u\phi$ is a Randers metric.

If $r\big(2c_1(r^2-s^2)-1-bs^2\big)= 0$ and $c\neq 0$, the equation (\ref{eq2}) concludes that $\phi$ is a singular Kropina metric. Here we omit this case since the Finsler metric is assumed to be regular. 

If $r\big(2c_1(r^2-s^2)-1-bs^2\big)= 0$ and $c=0$, which means $c_1=\frac{1}{2r^2}$ and $b=-\frac{1}{r^2}$, above calculation indicates the first equation in (\ref{eq1}) implies the second equation. In this case, the first equation turns to be 
\[\big(-\frac{s}{r}+2c_2r(r^2-s^2)s\big)\phi_s-\phi_r+\big(-\frac{1}{r}+2c_2rs^2\big)\phi=0.\]
According to Lemma \ref{So}, the general solution of the above equation is given by 
$$\phi=\psi(\frac{s^2}{g(r)+s^2\int 4rc_2(r)g(r)dr})e^{-\int (\frac{2}{r}-2r^3c_2(r))dr}s,$$ 
where $g(r)=e^{\int (\frac{2}{r}-4r^3c_2(r)) dr}$.
\end{proof}

From the proof of Theorem \ref{mt}, one can have the following corollary.

\begin{corollary} Under the same assumption of Theorem \ref{mt}, if $F$ is a Douglas metric and $S=0$ with respect to the volume form $dV$, then $F$ must be Berwaldian. 
\end{corollary}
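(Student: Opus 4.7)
The plan is to re-run the argument used to prove Theorem \ref{mt} with the additional input that the isotropy coefficient $c(r)$ vanishes identically. Indeed, $S\equiv 0$ is the case $c(r)\equiv 0$ of the isotropic S-curvature condition, so all of the setup carried out in the proof of Theorem \ref{mt} remains valid; in particular the Douglas hypothesis still yields $Q=c_1(r)+c_2(r)s^2$ by Lemma \ref{ldm}, and the vanishing S-curvature still forces $P=b(r)s$ (with $c(r)\phi$ gone) for the same $b(r)=-\frac{1}{n+1}[2c_1+2c_2 r^2+f]$.

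Substituting $c(r)=0$ into equation (\ref{eq2}) collapses the Riccati-type identity for $1/\phi$ to the much simpler
\[
\Bigl[r\bigl(2c_1(r^2-s^2)-1-bs^2\bigr)\,\tfrac{1}{\phi^2}\Bigr]_s=0.
\]
Integrating once in $s$ gives $r\bigl(2c_1(r^2-s^2)-1-bs^2\bigr)=k(r)\,\phi^2$ for some smooth function $k(r)$. I would then split into the same two subcases as in the proof of Theorem \ref{mt}; the Kropina branch does not arise here because it appeared only when $c\neq 0$.

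If $r\bigl(2c_1(r^2-s^2)-1-bs^2\bigr)\equiv 0$, we are exactly in the third subcase of the proof of Theorem \ref{mt}, and Lemma \ref{So} delivers the explicit Berwald form displayed in Theorem \ref{mt}(2). Otherwise $k(r)\not\equiv 0$, and solving for $\phi^2$ yields
\[
\phi^2=\frac{r(2c_1 r^2-1)}{k(r)}-\frac{r(2c_1+b)}{k(r)}\,s^2=A(r)+B(r)s^2,
\]
so that $F^2=u^2\phi^2=A(r)|y|^2+B(r)\langle x,y\rangle^2$. Thus $F$ is Riemannian, hence Berwaldian, and the corollary follows.

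The step that needs a little care is arguing that these two branches genuinely exhaust the possibilities on $\Omega$ and that one cannot have $k(r)$ vanishing on a proper subset: the identity $r(2c_1(r^2-s^2)-1-bs^2)=k(r)\phi^2$ is a polynomial identity in $s$, so wherever $k(r)=0$ both polynomial coefficients $A(r)$ and $B(r)$ must vanish as well, i.e., we locally fall into the first branch. A standard connectedness/propagation argument based on the smoothness of $\phi$ then shows that the conclusion ``$F$ is Berwaldian'' holds on all of $\Omega$. This is the only genuinely new point compared with the proof of Theorem \ref{mt}; everything else is a direct specialization of that argument.
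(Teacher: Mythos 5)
Your proposal is correct and is exactly the argument the paper intends: the corollary is stated as an immediate consequence of the proof of Theorem \ref{mt}, and setting $c(r)\equiv 0$ there collapses the Randers branch of equation (\ref{eq2}) to $\phi^2=A(r)+B(r)s^2$ (Riemannian, hence Berwaldian) while the remaining branch is already the explicit Berwald metric from Lemma \ref{So}. Your extra remark about the locus where $k(r)=0$ is a reasonable point of care, but it does not change the route.
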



\section{The classification theorems}
Theorem \ref{mt} concludes a spherically symmetric Douglas metric with isotropic S-curvature must be either a Randers metric or a Berwald metric if the given volume form is also spherically symmetric. The spherically symmetric Berwald metrics have already been completely classified in \cite{MZ}. The result makes us focusing on the Randers metric with isotropic S-curvature. 

 Let $F=\alpha+\beta$ be a Randers metric on a manifold $M$ with $n$-dimension, where $\alpha=\sqrt{a_{ij}y^iy^j}$ is a Riemann metric and $\beta=b_iy^i$ is an 1-from.  It is easy to verify that $F$ is a Finsler metric if and only if $||\beta_{x}||_{\alpha}<1$ for all $x\in M$. Let
 \[r_{ij}:=\frac{1}{2}(b_{i;j}+b_{j;i}), s_{ij}:=\frac{1}{2}(b_{i;j}-b_{j;i}),\]
 \[s^i_{\ j}:=a^{ik}s_{kj}, s_j:=b_is^i_{\ j}=b^ks_{kj}, e_{ij}=r_{ij}+b_is_j+b_js_i,\]
where $";"$ denotes the covariant derivative with respect to the Levi-Civita connection
of $\alpha$. The spray coefficients $G^i$ of $F$ and the spray coefficients $G^i_{\ \alpha}$ of $\alpha$ are related by 
\[G^i=G^i_{\ \alpha}+Py^i+Q^i,\]
where
 \[P:=\frac{e_{00}}{F}-s_0, Q^i=\alpha s^i_{\ 0},\] 
where $e_{00}:=e_{ij}y^iy^j$, $s_0:=s_iy^i$ and $s^i_{\ 0}:=s^i_{\ j}y^j$.

The following lemma is first proved by S. B\'acs\'o and M. Matsumoto \cite{BM}:
\begin{lemma}\label{drc}
For a Randers metric $F=\alpha+\beta$ on a manifold, it is  a Douglas metric if and only if $\beta$ is closed.   
\end{lemma}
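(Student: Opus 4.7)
My plan is to apply the Randers spray decomposition stated just above, $G^i=G^i_\alpha+Py^i+\alpha s^i_{\ 0}$, and to derive a one-line obstruction to the Douglas condition coming from the irrationality of $\alpha$.

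\emph{Sufficiency.} If $\beta$ is closed, then $s_{ij}=0$, hence $s^i_{\ 0}=0$. The spray reduces to $G^i=G^i_\alpha+Py^i$, and since $G^i_\alpha=\tfrac12\gamma^i_{jk}(x)y^jy^k$ is quadratic in $y$ while the remaining term is aligned with $y^i$, $F$ satisfies the defining form of a Douglas metric.

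\emph{Necessity.} The idea is to strip off the manifestly polynomial parts of $G^i$ and reduce the Douglas condition to a statement about $\alpha s^i_{\ 0}$. Using the antisymmetry of $s_{ij}$, one checks that $s^m_{\ m}=0$ and $y_m s^m_{\ 0}=s_{ij}y^iy^j=0$, whence
\[\frac{\partial}{\partial y^m}\bigl(\alpha s^m_{\ 0}\bigr)=\frac{y_m}{\alpha}\,s^m_{\ 0}+\alpha\,s^m_{\ m}=0.\]
Combined with $\partial(Py^m)/\partial y^m=(n+1)P$, which follows from Euler's identity since $P$ is homogeneous of degree one in $y$, this yields
\[\frac{\partial G^m}{\partial y^m}=\frac{\partial G^m_\alpha}{\partial y^m}+(n+1)P.\]
Hence the Douglas quantity
\[D^i:=G^i-\frac{1}{n+1}\frac{\partial G^m}{\partial y^m}\,y^i=D^i_\alpha+\alpha s^i_{\ 0},\]
where $D^i_\alpha$ is a polynomial of degree two in $y$. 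Thus the vanishing of the Douglas tensor $\partial^3 D^i/\partial y^j\partial y^k\partial y^l$ is equivalent to $\alpha s^i_{\ 0}$ being a polynomial of degree at most two in $y$.

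The main obstacle is the final step: showing that polynomiality of $\alpha s^i_{\ 0}$ forces $s^i_{\ 0}=0$. Writing $L^i(y):=s^i_{\ j}y^j$, which is linear in $y$, assume $\alpha L^i=p^i(y)$ for some polynomial $p^i$. Squaring produces the polynomial identity $(a_{jk}y^jy^k)\,(L^i)^2=(p^i)^2$ in $\mathbb R[y^1,\dots,y^n]$. Because $a$ is positive definite, $a_{jk}y^jy^k$ is irreducible over $\mathbb R$; by unique factorisation it must divide $p^i$, and a comparison of degrees then collapses the identity and forces $L^i\equiv 0$. Consequently every $s^i_{\ j}$ vanishes, so $s_{ij}=0$ and $\beta$ is closed. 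I expect this irreducibility-plus-degree argument to be the only delicate point; everything else is a direct computation from the spray formula recalled at the beginning of the section.
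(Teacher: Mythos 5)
Your proof is correct. Note first that the paper itself offers no proof of this lemma: it is quoted verbatim from B\'acs\'o--Matsumoto \cite{BM}, so there is no internal argument to compare against; what you have written is essentially the classical proof from that reference, reconstructed correctly. The sufficiency direction is immediate as you say. For necessity, your reduction via the projective quantity $D^i=G^i-\frac{1}{n+1}\frac{\partial G^m}{\partial y^m}y^i$ is sound: the identities $s^m_{\ m}=0$ and $y_ms^m_{\ 0}=s_{ij}y^iy^j=0$ do kill the divergence of $\alpha s^m_{\ 0}$, and Euler's identity handles $Py^m$, so the Douglas condition (in the paper's sense, i.e.\ $G^i$ equals a quadratic form plus a multiple of $y^i$, which is equivalent to $D^i$ being a quadratic polynomial) collapses to the polynomiality of $\alpha s^i_{\ 0}$. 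The irrationality argument is the right closing move and is complete as sketched: for $n\ge 2$ a positive definite quadratic form is irreducible in $\mathbb{R}[y^1,\dots,y^n]$ (a factorization into linear forms would give it a nontrivial real zero set), so $a_{jk}y^jy^k\mid p^i$ forces $p^i=(a_{jk}y^jy^k)q^i$, the degree count gives $q^i$ constant, and then $(L^i)^2=(a_{jk}y^jy^k)(q^i)^2$ is impossible for $q^i\neq 0$ since the left side has rank one; hence $L^i\equiv 0$ and $\beta$ is closed. The only points worth making explicit in a final write-up are the (standard) equivalence between the paper's definition of Douglas metric and the vanishing of $\partial^3D^i/\partial y^j\partial y^k\partial y^l$, and the restriction $n\ge 2$ in the irreducibility step; neither is a gap of substance.
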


For a Randers metric $F=\alpha+\beta$, if let \[\rho:=\ln \sqrt{1-||\beta_{x}||_{\alpha}^2},\]
the Busemann-Hausdorff volume form $dV_{BH}$ and the Riemannian volume form $dV_{\alpha}$ are related by 
\[dV_{BH}=e^{(n+1)\rho(x)}dV_{\alpha}.\]
Plugging above identities into the formula of the S-curvature (\ref{sf}) gives the S-curvature of Randers metric
\[S=(n+1){\frac{e_{00}}{2F}-(s_0+\rho_0)}\]
where $\rho_0:=y^k\rho_{x^k}(x)$.

\begin{lemma}\cite{CS1} \label{src}
 Let $F=\alpha+\beta$  be a Randers metric on a manifold $M$ with $n$-dimension and  the given volume form be the Busemann-Hausdorff volume form $dV_{BH}$.  The following are equivalent
\begin{enumerate}
\item $S=(n+1)cF$,
\item $e_{00}=2c(\alpha^2-\beta^2)$,
\end{enumerate}
where $c=c(x)$ is a scalar function on $M$.
\end{lemma}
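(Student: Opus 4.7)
The plan is to exploit the Randers S-curvature formula
$$S = (n+1)\left\{\frac{e_{00}}{2F} - (s_0 + \rho_0)\right\}$$
displayed just above the lemma, together with the observation that $\alpha = \sqrt{a_{ij}y^iy^j}$ is irrational in $y$ while $e_{00}$, $\beta$, $s_0$ and $\rho_0$ are all polynomials in $y$. Consequently, the equation $S=(n+1)cF$ splits naturally into a rational and an irrational part in $y$.

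For $(1)\Rightarrow(2)$, I would multiply the hypothesis by $2F = 2(\alpha+\beta)$ to clear denominators, obtaining
$$e_{00} - 2(s_0+\rho_0)(\alpha+\beta) = 2c(\alpha+\beta)^2.$$
Separating the odd-degree and even-degree terms in $\alpha$ produces two identities. The irrational part forces $s_0+\rho_0 = -2c\beta$, and substituting this back into the rational identity $e_{00} = 2c(\alpha^2+\beta^2) + 2(s_0+\rho_0)\beta$ collapses it to the desired $e_{00} = 2c(\alpha^2-\beta^2)$.

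For the converse $(2)\Rightarrow(1)$, the essential task is to recover the companion identity $s_0+\rho_0 = -2c\beta$ from the algebraic hypothesis on $e_{00}$, since the relation $\frac{e_{00}}{2F} = c(\alpha-\beta)$ is then immediate and plugs into the S-curvature formula to give $S=(n+1)cF$. Set $b^2 := a^{ij}b_ib_j$. Reading $e_{00} = 2c(\alpha^2-\beta^2)$ as a tensor identity $e_{ij} = 2c(a_{ij}-b_ib_j)$ and contracting with $b^iy^j$ yields $b^i e_{i0} = 2c(1-b^2)\beta$. On the other hand, from $e_{ij}=r_{ij}+b_is_j+b_js_i$ and the fact that $b^is_i=0$ (antisymmetry of $s_{ij}$), one computes $b^ie_{i0} = r_0 + b^2 s_0$ where $r_0 := b^i r_{i0}$. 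Differentiating $\rho = \frac{1}{2}\ln(1-b^2)$ and using $b_{i;k} = r_{ik}+s_{ik}$ produces $\rho_0 = -(r_0+s_0)/(1-b^2)$. Putting these together,
$$s_0 + \rho_0 = -\frac{r_0 + b^2 s_0}{1-b^2} = -\frac{b^i e_{i0}}{1-b^2} = -2c\beta,$$
which is exactly the needed identity.

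The main obstacle is the converse direction: in $(1)\Rightarrow(2)$ the rational/irrational decomposition is essentially formal, but in $(2)\Rightarrow(1)$ one has to show that the purely algebraic condition $e_{00}=2c(\alpha^2-\beta^2)$ already encodes the differential compatibility relating $s_0$, $r_0$ and $\rho_0$. The contraction of $e_{ij}$ with $b^i$, which bridges the algebraic identity to the covariant-derivative expression for $\rho_0$, is the crucial step.
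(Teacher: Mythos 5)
Your proposal is correct and complete. Note that the paper itself gives no proof of this lemma --- it is quoted from \cite{CS1} --- so the only in-paper point of comparison is the proof of Lemma \ref{src1} (the Holmes--Thompson analogue), which uses exactly your rational/irrational splitting in $\alpha$ for the forward direction; your $(1)\Rightarrow(2)$ therefore matches the technique the author deploys elsewhere. The genuine content of your argument is the converse, which the paper never addresses: you correctly observe that $\frac{e_{00}}{2F}=c(\alpha-\beta)$ alone does not give $S=(n+1)cF$ until one also recovers $s_0+\rho_0=-2c\beta$, and you obtain this by polarizing $e_{00}=2c(\alpha^2-\beta^2)$ to $e_{ij}=2c(a_{ij}-b_ib_j)$, contracting with $b^iy^j$ to get $b^ie_{i0}=2c(1-b^2)\beta$, and matching this against $b^ie_{i0}=r_0+b^2s_0$ (using $b^is_i=0$ by antisymmetry) together with the differentiation $\rho_0=-(r_0+s_0)/(1-b^2)$ coming from $b_{i;k}=r_{ik}+s_{ik}$. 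All of these identities check out, and the division by $1-b^2$ is licensed by $\|\beta\|_\alpha<1$. This is the standard route taken in \cite{CS1}, and your write-up correctly isolates the one nontrivial step (the $b^i$-contraction bridging the algebraic hypothesis to the derivative of $\rho$); I see no gap.
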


For a Randers metric $F=\alpha+\beta$,  by a direct computation, it's Holmes-Thompson volume form $dV_{HT}$ is given by 
\[dV_{HT}=dV_{\alpha}.\]
According to the formula of S-curvature (\ref{sf}), we have 
\[S=(n+1){\frac{e_{00}}{2F}-s_0}.\]

\begin{lemma} \label{src1}
 Let $F=\alpha+\beta$  be a Randers metric on a manifold $M$ with $n$-dimension and  the given volume form be the Holmes-Thompson volume form $dV_{HT}$. 
 If $F$ has isotropic S-curvature, then $\beta$ is parallel 1-form to $\alpha$.
 \end{lemma}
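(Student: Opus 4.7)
The plan is to parallel the derivation that led to Lemma \ref{src} in the Busemann--Hausdorff case. First I would substitute the Holmes--Thompson S-curvature formula
\[
S=(n+1)\!\left\{\frac{e_{00}}{2F}-s_0\right\}
\]
into the isotropy condition $S=(n+1)cF$ and clear the factor $2F$ to get $e_{00}-2s_0(\alpha+\beta)=2c(\alpha+\beta)^2$. Rearranging so that every term containing the irrational piece $\alpha$ sits on one side yields the identity
\[
e_{00}-2s_0\beta-2c\alpha^{2}-2c\beta^{2}=2\alpha\,(s_0+2c\beta).
\]

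Next I would exploit the standard irrationality argument. For $n\ge 2$ the quadratic form $a_{ij}y^iy^j$ is irreducible in $\mathbb{R}[y^1,\ldots,y^n]$, so $\alpha$ is not a rational function of $y$. Since the left-hand side is polynomial in $y$ while the right-hand side is $\alpha$ times a polynomial linear in $y$, both sides must vanish separately. This gives
\[
s_0+2c\beta=0,\qquad e_{00}=2c\alpha^{2}+2c\beta^{2}+2s_0\beta.
\]
Combined with $e_{00}=r_{00}+2s_0\beta$, these collapse to the tensorial identities
\[
r_{ij}=2c\,(a_{ij}+b_ib_j),\qquad s_i=-2c\,b_i.
\]

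From $s_i=-2cb_i$ I would contract with $b^i$: the antisymmetry of $s_{ki}$ forces $b^ib^ks_{ki}=0$, so $c\,\|\beta\|_\alpha^{2}=0$. Hence either $\beta\equiv 0$---in which case $F=\alpha$ is Riemannian and $\beta$ is trivially parallel---or $c\equiv 0$ on the open set where $\beta$ does not vanish, whence $r_{ij}=0$ and $s_i=0$ there. At this point $\beta$ is already a Killing $1$-form for $\alpha$ with locally constant $\alpha$-norm.

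The main obstacle is the final step: ruling out a nonzero antisymmetric part $s_{ij}=b_{i;j}$, so that ``Killing + constant length'' is upgraded to ``parallel.'' I expect this to be squeezed out by differentiating the identity $s_i=-2cb_i$ covariantly, feeding back in $r_{ij}=0$ and $b^ks_{ki}=0$, and then applying the Ricci identity $b_{i;j;k}-b_{i;k;j}=-R^l_{\ ijk}b_l$ to the resulting tensorial relations until $s_{ij}\equiv 0$ is forced. Everything preceding is routine polynomial manipulation; this closing rigidity step is where the real work lies.
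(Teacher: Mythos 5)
Your computation tracks the paper's own proof exactly up to the point where you stop: the paper also splits the identity into rational and irrational parts in $\alpha$, obtains $2s_0+4c\beta=0$ and $e_{00}=2c\alpha^2+2c\beta^2+2s_0\beta$, contracts $s_i=-2cb_i$ with $b^i$ to kill $c$, and concludes $r_{ij}=0$ and $s_i=0$. So everything you label ``routine'' is indeed the intended argument, and your bookkeeping ($r_{ij}=2c(a_{ij}+b_ib_j)$, etc.) is correct.

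The step you flag as ``the real work'' is a genuine gap, but the strategy you propose for closing it cannot succeed. From $r_{ij}=0$ and $s_i=0$ one only gets that $\beta$ is a Killing $1$-form of constant $\alpha$-length, and such a form need not be parallel: take $\alpha$ the round metric on $S^3$ and $\beta$ a small multiple of the $1$-form dual to the Hopf vector field; then $r_{ij}=0$ and $s_i=0$, hence $e_{00}=0$ and the Holmes--Thompson S-curvature vanishes, so $F$ has isotropic S-curvature with $c=0$, yet $s_{ij}\neq 0$. No amount of covariant differentiation and Ricci identities will therefore force $s_{ij}=0$ from these hypotheses alone --- the lemma as literally stated is false without an additional assumption. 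The paper's own proof is incomplete in exactly the same place: it asserts ``$\beta$ is parallel'' immediately after reaching $r_{ij}=0$ and $s_i=0$. What rescues the argument in its only application (the proof of Theorem \ref{ctht}) is that $F$ is there assumed to be a Douglas metric, so by Lemma \ref{drc} the form $\beta$ is closed, $s_{ij}=0$, and then $b_{i;j}=r_{ij}+s_{ij}=0$. The correct repair is to add ``$\beta$ closed'' (equivalently, ``$F$ is Douglas'') to the hypotheses and finish in one line, rather than to attempt the rigidity argument you sketch.
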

\begin{proof}  $F$ has isotropic S-curvature i.e.
\[S=(n+1)cF.\]
On the other hand, a Randers metric's S-curvature is given by
\[S=(n+1){\frac{e_{00}}{2F}-s_0}.\]
This implies that
\[ {\frac{e_{00}}{2F}-s_0}=cF.\] 
Plugging $F=\alpha+\beta$ into above identity will give
\[e_{00}=2c\alpha^2+2c\beta^2+2s_0\beta+\alpha(2s_0+4c\beta).\]
Notice $e_{00}$, $2c\alpha^2+2c\beta^2+2s_0\beta$ is the rational part and $ \alpha(2s_0+4c\beta)$ is the irrational part, thus 
\[e_{00}=2c\alpha^2+2c\beta^2+2s_0\beta, \quad 2s_0+4c\beta=0.\]
It means $$s_i=-2cb_i.$$
Contracting with $b^i$ implies that $c=0$. Therefore \[s_i=2cb_i=0.\]
So
\[e_{00}=r_{ij}y^iy^j+s_ib_j+s_jb_i=r_{ij}y^iy^j=2c\alpha^2+2c\beta^2+2s_0\beta=0.\]
It means  $\beta$ is parallel 1-form to $\alpha$.
\end{proof}

For a spherically symmetric Douglas metric with isotropic S-curvature,  we have a complete classification, i.e. Theorem \ref{ctbh} and Theorem \ref{ctht}, for the two  specific form volume: Busemann-Hausdorff volume form and the Holmes-Thompson volume form.

\begin{proof} [Proof of Theorem \ref{ctbh}]
Let us first prove the necessity. By Theorem \ref{mt}, $F=u\phi(r,s)$ must be a Randers metric (Riemann metric included) or a Berwald metric which can be written as $$F=u\psi(\frac{s^2}{g(r)+s^2\int 4rc_2(r)g(r)dr})e^{-\int (\frac{2}{r}-2r^3c_2(r))dr}s,$$ 
where $c_2(r)$ is a smooth function and $g(r)=e^{\int (\frac{2}{r}-4r^3c_2(r))    dr}$.  Thus we only needs to discuss the Randers case. 

If $F$ is a Randers metric, then $F$ has the following form
\[F=\sqrt{f(r)|y|^2+g(r)\langle x,y\rangle^2}+h(r)\langle x,y \rangle\]
where $f(r)$, $g(r)$ and $h(r)$ are smooth functions. In this case, it means 
\[\alpha=\sqrt{f(r)|y|^2+g(r)\langle x,y\rangle^2}, \quad \beta=h(r)\langle x,y \rangle.\]
Hence
\[a_{ij}=f(r)\delta_{ij}+g(r)x^ix^j, \quad b_i=h(r)x^i.\]
It is easy to verify that \[\frac{\partial b_i}{\partial x^j}-\frac{\partial b_j}{\partial x^i}=0.\]
 So $\beta$ is closed. 
Furthermore, we can yield
\[det(a_{ij})=\big(f+r^2g\big)f^{n-1},\quad  a^{ij}=\frac{1}{f}\delta^{ij}-\frac{g}{f(f+r^2g)}x^ix^j\]
and 
\[||\beta||_{\alpha}^2=b_ib_ja^{ij}=\frac{r^2h^2}{f+r^2g}.\]
Since $F=\alpha+\beta$ is a Randers metric, it implies that $\alpha$ is positive definite and $||\beta||_{\alpha}^2<1$. Therefore 
\[f(r)>0, \quad f(r)+r^2g(r)>0,\quad  f(r)+r^2(g(r)-h(r)^2)>0.\]
Obviously the third inequality implies the second inequality. 
The Christoffel symbols $\Gamma^k_{ij}(x)$ of $\alpha$ is given by
\begin{eqnarray*}
\Gamma^k_{ij}&=&\frac{a^{kl}}{2}\Big\{\frac{\partial a_{li}}{\partial x^j}+\frac{\partial a_{lj}}{\partial x^i}-\frac{\partial a_{ij}}{\partial x^l}\Big \}\\
&=&\frac{1}{2}\big( \frac{fg'-2f'g}{rf(f+r^2g)}x^ix^jx^k+\frac{2rg-f'}{r(f+r^2g)}x^k\delta_{ij}+\frac{f'}{rf}(x^i\delta_{kj}+x^j\delta_{ki})\big).
\end{eqnarray*} 
Thus
\begin{eqnarray*}
b_{i;j}&=&\frac{\partial b_i}{\partial x^j}-b_k\Gamma^k_{\ ij}\\
&=&\frac{1}{2}\frac{h(rf'+2f)}{f+r^2g}\delta_{ij}+(\frac{h'}{r}-\frac{1}{2}\frac{h(r^2g'+2f')}{r(f+r^2g)})x_ix_j.
\end{eqnarray*} 
For $\beta$ is closed, we have
\begin{eqnarray*}
e_{00}&=&r_{ij}y^iy^j=\frac{1}{2}(b_{i;j}+b_{j;i})y^iy^j\\
&=&\frac{1}{2}\frac{h(rf'+2f)}{f+r^2g}u^2+\Big(\frac{h'}{r}-\frac{1}{2}\frac{h(r^2g'+2f')}{r(f+r^2g)}\Big)s^2u^2.
\end{eqnarray*}
By Lemma \ref{src}, $e_{00}$ satisfies $e_{00}=2c(\alpha^2-\beta^2)$, i.e.
\[\frac{1}{2}\frac{h(rf'+2f)}{f+r^2g}u^2+\Big(\frac{h'}{r}-\frac{1}{2}\frac{h(r^2g'+2f')}{r(f+r^2g)}\Big)s^2u^2=2c\big(f+(g-h^2)s^2\big)u^2\]
where $c=c(r)$ is a function of $r$. 
Comparing LHS and RHS of the equation will obtain
\begin{equation*}
\left\{ \begin{array}{l}
 \frac{1}{2}\frac{h(rf'+2f)}{f+r^2g}=2cf\\
 \frac{h'}{r}-\frac{1}{2}\frac{h(r^2g'+2f')}{r(f+r^2g)}=2c(g-h^2).
  \end{array} \right.
          \end{equation*}
 Eliminating the function $c(r)$, we get 
 \[r^2fh\frac{d g(r)}{dr}+\big(2rfh+r^2f'h-2r^2fh'\big)g(r)+2ff'h-2f^2h'-2rfh^2-r^2f'h^3=0.\]

 Conversely, it is obvious that if $F$ is Riemannian or Berwaldian, then $F$ is Douglas metric and has a vanishing $S$-curvature. 
 
 If  $F=\sqrt{f(r)|y|^2+g(r)\langle x,y\rangle^2}+h(r)\langle x,y \rangle$ where $f$, $g$ and $h$ satisfy $f(r)>0$, $f(r)+(g(r)-h(r)^2)r^2>0$ is a spherically symmetric Randers metric and $\beta$ is closed by the previous calculation. By Lemma \ref{drc}, $F$ is a Douglas metric. In above computation, one can see that each step is reversible. The equation \[r^2fh\frac{d g(r)}{dr}+\big(2rfh+r^2f'h-2r^2fh'\big)g(r)+2ff'h-2f^2h'-2rfh^2-r^2f'h^3=0\] assures  $F$ has isotropic S-curvature with respect to $dV_{BH}$. 
 
\end{proof}

\begin{proof} [Proof of Theorem \ref{ctht}]
Firstly, if $F$ is a Douglas metric with isotropic S-curvature, by Theorem \ref{mt}, $F$ must be a Randers metric (Riemann metric included) or a Berwald metric which can be written as $$F=u\psi(\frac{s^2}{g(r)+s^2\int 4rc_2(r)g(r)dr})e^{-\int (\frac{2}{r}-2r^3c_2(r))dr}s,$$ 
where $c_2(r)$ is a smooth function and $g(r)=e^{\int (\frac{2}{r}-4r^3c_2(r))    dr}$. 

If $F$ is a Randers metric, it can be expressed  by 
\[F=\sqrt{f(r)|y|^2+g(r)\langle x,y\rangle^2}+h(r)\langle x,y \rangle.\]
By Lemma \ref{src1}, $\beta$ must be parallel 1-from which is equivalent to 
\[b_{i;j}=0.\]
Plugging 
\[b_{i;j}=\frac{1}{2}\frac{h(rf'+2f)}{f+r^2g}\delta_{ij}+(\frac{h'}{r}-\frac{1}{2}\frac{h(r^2g'+2f')}{r(f+r^2g)})x_ix_j
\] 
into above identity obtains
\begin{equation} \label{eq3}
\left\{ \begin{array}{l}
\frac{1}{2}\frac{h(rf'+2f)}{f+r^2g}=0\\
\frac{h'}{r}-\frac{1}{2}\frac{h(r^2g'+2f')}{r(f+r^2g)}=0.
 \end{array} \right.
          \end{equation}     
From the first equation, we know that $$h=0$$ or $$rf'+2f=0.$$
If $h=0$, it means $F$ to be Riemannian. If $rf'+2f=0$, solving it will arrive at
\[f=\frac{c}{r^2}.\]
Plugging $f$ into the second equation of (\ref{eq3}), one can get
\[2h'(\frac{c}{r^2}+r^2g)-h(r^2g'-\frac{4c}{r^3})=0.\]
The condition of $F$ to be a  Randers metric leads to $c$ is a positive constant and $g(r)-h(r)^2>-\frac{c}{r^4}$.

Conversely it is easy to verify if $F$ is Riemannian and Berwaldian, $F$ is a Douglas metric and the S-curvature of $F$ vanishes. 

If $F=\sqrt{\frac{c}{r^2}|y|^2+g(r)\langle x,y\rangle^2}+h(r)\langle x,y \rangle$ and $g(r)$, $h(r)$ satisfy
\[2h'(\frac{c}{r^2}+r^2g)-h(r^2g'-\frac{4c}{r^3})=0,\]
then $\beta$ is a parallel 1-form. Thus $F$ is also a Douglas metric and the S-curvature vanishes. 
\end{proof}

{\small DEPARTMENT OF MATHEMATICS, EAST CHINA NORMAL UNIVERSITY }

{\small SHANGHAI 200062, CHINA}\newline
{\small E-mail address: lfzhou@math.ecnu.edu.cn}
\end{document}